\author{M. Montassier and P. Ochem\thanks{LIRMM (Universit\'e Montpellier 2, CNRS), Montpellier, France.}}
\title{Near-colorings: non-colorable graphs and NP-completeness}
\newenvironment{proof}{\par \noindent \textsc{Proof.} }{\hfill$\Box$\medskip}
\newtheorem{theorem}{Theorem}
\newtheorem{lemma}[theorem]{Lemma}
\newtheorem{claim}[theorem]{Claim}
\newtheorem{question}[theorem]{Question}
\begin{document}

\maketitle

\begin{abstract}
A graph $G$ is \emph{$(d_1,\ldots,d_l)$-colorable} if the vertex set of
$G$ can be partitioned into subsets $V_1,\ldots,V_l$ such that the
graph $G[V_i]$ induced by the vertices of $V_i$ has maximum degree at
most $d_i$ for all $1 \leq i\leq l$. In this paper, we focus on
complexity aspects of such colorings when $l=2,3$. More precisely, we
prove that, for any fixed integers $k,j,g$ with $(k,j)\ne (0,0)$ and
$g\ge 3$, either every planar graph with girth at least $g$ is
$(k,j)$-colorable or it is NP-complete to determine whether a planar
graph with girth at least $g$ is $(k,j)$-colorable. Also, for any
fixed integer $k$, it is NP-complete to determine whether a planar
graph that is either (0,0,0)-colorable or non-$(k,k,1)$-colorable is
$(0,0,0)$-colorable. Additionally, we exhibit non-$(3,1)$-colorable
planar graphs with girth 5 and non-$(2,0)$-colorable planar graphs
with girth 7.
\end{abstract} 

\section{Introduction}
A graph $G$ is \emph{$(d_1,\ldots,d_k)$-colorable} if the vertex set of
$G$ can be partitioned into subsets $V_1,\ldots,V_k$ such that the
graph $G[V_i]$ induced by the vertices of $V_i$ has maximum degree at
most $d_i$ for all $1 \leq i\leq k$. This notion generalizes those of
proper $k$-coloring (when $d_1=\ldots=d_k=0)$ and $d$-improper
$k$-coloring (when $d_1=\ldots=d_k=d\ge1)$. 

Planar graphs are known to be $(0,0,0,0)$-colorable (Appel and Haken
\cite{AHa77,AHb77}) and $(2,2,2)$-colorable (Cowen \emph{et
 al.}~\cite{CCW86}). Note that the result of Cowen \emph{et al.} is
optimal (for any integer $k$, there exist non-$(k,k,1)$-colorable
planar graphs) and holds in the choosability case (Eaton and Hull
\cite{EH99} or \v{S}krekovski~\cite{Skr99}). This last result was then
improved for planar graphs with large girth or for graphs with low
maximum average degree. We recall that the girth of a graph $G$,
denoted by $g(G)$, is the length of a shortest cycle in $G$, and the
maximum average degree of a graph $G$, denoted by ${\rm mad(G)}$, is
the maximum of the average degrees of all subgraphs of $G$, i.e. ${\rm
 mad}(G)=\max\,\left\{2|E(H)| / |V(H)|\,, H\subseteq G\right\}$.

\paragraph{$(1,0)$-coloring.}
Glebov and Zambalaeva~\cite{GZ07} proved that every planar graph $G$
with girth at least 16 is $(1,0)$-colorable. This was then
strengthened by Borodin and Ivanova~\cite{BI09a} who proved that every
graph $G$ with ${\rm mad}(G)<\frac {7}3$ is $(1,0)$-colorable.  This
implies that every planar graph $G$ with girth at least 14 is
$(1,0)$-colorable. Borodin and Kostochka~\cite{BK11} then proved that
every graph $G$ with ${\rm mad}(G)\le\frac {12}{5}$ is $(1,0)$-colorable.
In particular, it follows that every planar graph
$G$ with girth at least 12 is $(1,0)$-colorable. On the other hand,
they constructed graphs $G$ with ${\rm mad}(G)$ arbitrarily close
(from above) to $\frac {12}{5}$ that are not $(1,0)$-colorable; hence
their upper bound on the maximum average degree is best possible.
Also, Esperet \emph{et al.}~\cite{EMOP11} constructed a
non-$(1,0)$-colorable planar graph with girth 9; hence planar graphs
with girth at least 10 or 11 may be $(1,0)$-colorable.
To our knowledge, the question is still open.

\paragraph{$(k,0)$-coloring with $k\geq 2$.}
Borodin \emph{et al.}~\cite{BIMR09} proved that every graph $G$ with
${\rm mad}(G) < \frac {3k+4}{k+2}$ is $(k,0)$-colorable. The proof
in~\cite{BIMR09} extends that in~\cite{BI09a} but does not work for $k=1$.
Moreover, they exhibited a non-$(k,0)$-colorable planar graph with girth 6.
Finally, Borodin and Kostochka~\cite{BK2011} proved that for $k\ge 2$, every
graph $G$ with ${\rm mad}(G)\le \frac{3k+2}{k+1}$ is $(k,0)$-colorable.
This result is tight in terms of maximum average degree.

\paragraph{$(k,1)$-coloring.} 
Recently, Borodin, Kostochka, and Yancey~\cite{BKY2011} proved that
every graph with ${\rm mad}(G)\le \frac{14}{5}$ is $(1,1)$-colorable,
and the restriction on ${\rm mad}(G)$ is sharp. In~\cite{BIMR09c}, it
is proven that every graph $G$ with ${\rm mad}(G)<
\frac{10k+22}{3k+9}$ is $(k,1)$-colorable for $k\ge 2$.

\paragraph{$(k,j)$-coloring.}
A first step was made by Havet and Sereni~\cite{HS06} who showed that,
for every $k\ge 0$, every graph $G$ with ${\rm mad}(G)<\frac{4k+4}{k+2}$
is $(k,k)$-colorable (in fact $(k,k)$-choosable). More generally, they studied $k$-improper
$l$-choosability and proved that every graph $G$ with ${\rm mad (G)} <
l + \frac{lk}{l+k}$ ($l\ge 2, k\ge 0$) is $k$-improper $l$-choosable ;
this implies that such graphs are $(k,\ldots,k)$-colorable (where the
number of partite sets is $l$). Borodin \emph{et al.}~\cite{BIMR09d}
gave some sufficient conditions of $(k,j)$-colorability depending on
the density of the graphs using linear programming.
Finally, Borodin and Kostochka~\cite{BK2011} solved the
problem for a wide range of $j$ and $k$: let $j\ge 0$ and $k\ge 2j+2$;
every graph $G$ with ${\rm mad}(G)\le 2(2-\frac{k+2}{(j+2)(k+1)})$
is $(k,j)$-colorable. This result is tight in terms of the maximum average
degree and improves some results in~\cite{BIMR09,BIMR09c,BIMR09d}.

\medskip
Using the fact that every planar graph $G$ with girth $g(G)$ has ${\rm
 mad}(G)<2g(G)/(g(G)-2)$, the previous results give results for
planar graphs. They are summarized in Table~\ref{nearColoring-tab01}.

\begin{table}[htbp]
\label{nearColoring-tab01}
\begin{center}
\begin{tabular}{|c|c|c|c|c|c|}
\hline
girth & $(k,0)$ & $(k,1)$ & $(k,2)$ & $(k,3)$ & $(k,4)$ \\ \hline
3,4 & $\times$ & $\times$ & $\times$ & $\times$ & $\times$ \\ \hline
5 & $\times$ & ? & $(6,2)$~\cite{BK2011}& ? & $(4,4)$~\cite{HS06}\\ \hline
6 & $\times$~\cite{BIMR09}& $(4,1) $~\cite{BK2011} & $(2,2)$~\cite{HS06}& & \\ \hline
7 & $(4,0)$~\cite{BK2011}& $(1,1)$~\cite{BKY2011}& & &\\ \hline
8 & $(2,0)$~\cite{BK2011}& & & &\\ \hline
12 & $(1,0)$~\cite{BK11}& & & &\\ \hline
\end{tabular}
\caption{The girth and the $(k,j)$-colorability of planar graphs.
 The symbol ``$\times$'' means that there exist
 non-$(k,j)$-colorable planar graphs for any value of $k$.
 }
\end{center}
\end{table}

From the previous discussion, the following questions are natural: 

\begin{question}{\ }
\begin{enumerate}
 \item Are planar graphs with girth 10 $(1,0)$-colorable?
 \item Are planar graphs with girth 7 $(3,0)$-colorable?
 \item Are planar graphs with girth 6 $(1,1)$-colorable?
 \item Are planar graphs with girth 5 $(4,1)$-colorable or $(k,1)$-colorable for some $k$ ?
 \item Are planar graphs with girth 5 $(2,2)$-colorable?
\end{enumerate}
\end{question}

%
%
%
%
%

\paragraph{$(d_1,\ldots,d_k)$-coloring.} Finally we would like to
mention two studies. Chang \emph{et al.}~\cite{CH+11} gave some
approximation results to Steinberg's Conjecture using
$(k,j,i)$-colorings. Dorbec {\em et al.}\cite{DK+12} studied the particular case of $(d_1,\ldots,d_k)$-coloring
where the value of each $d_i$ ($i=1...k$) is either 0 or some value $d$,
making the link between $(d,0)$-coloring~\cite{BK2011} and $(d,...,d)$-coloring~\cite{HS06}.

\bigskip
The aim of this paper is to provide complexity results on the subject
and to obtain non-colorable planar graphs showing that some above-mentioned results are optimal.

\begin{claim} \label{claim} There exist:
\begin{enumerate}
\item a non-$(k,j)$-colorable planar graph with girth 4, for every $k,j\ge 0$,
\item a non-$(3,1)$-colorable planar graph with girth 5,
\item a non-$(2,0)$-colorable planar graph with girth 7.
\end{enumerate}
\end{claim}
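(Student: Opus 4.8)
The plan is to reduce everything to the construction of small \emph{forcing gadgets} and then to wire copies of them together so that some vertex cannot be colored at all. The basic primitive is an \emph{equality gadget}: I claim that the complete bipartite graph $K_{2,k+j+1}$ forces its two vertices of degree $k+j+1$, say $x$ and $y$, to receive the same color in every $(k,j)$-coloring. Indeed, suppose $x$ and $y$ lie in different parts, say $x\in V_1$ and $y\in V_2$. Each of the $k+j+1$ common neighbors is adjacent to both $x$ and $y$; a common neighbor placed in $V_1$ is then counted among the $V_1$-neighbors of $x$, and one placed in $V_2$ among the $V_2$-neighbors of $y$. Hence at most $k$ common neighbors lie in $V_1$ and at most $j$ lie in $V_2$, contradicting that there are $k+j+1$ of them. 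So $x$ and $y$ share a color, and the gadget is bipartite of girth $4$ whenever $k+j\ge 1$.

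From this primitive I build two further tools, still of girth $4$. Chaining equality gadgets forces an arbitrarily large independent set to be \emph{monochromatic} (all of one, unspecified, color); I call such a set a \emph{block}. Next I build an \emph{inequality gadget} forcing two vertices $a,b$ to receive distinct colors: take a block $M$ of size $\max(k,j)+1$, force $a$ to share the color of $M$ by one more equality gadget, and join $b$ to every vertex of $M$. If $b$ had the color of $M$ it would have $\max(k,j)+1$ same-colored neighbors, exceeding whichever of $k$ or $j$ is its cap; hence $b$ differs from $M$, and therefore from $a$. Since the vertices of a block are pairwise non-adjacent (they meet only through gadget leaves), joining $b$ to $M$ creates $4$-cycles but no triangle, so girth $4$ is preserved.

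With these tools part (1) follows. Let $N=\max(k,j)+1$, take two blocks $P$ and $Q$ of size $N$, apply one inequality gadget between a vertex of $P$ and a vertex of $Q$ to force the color of $P$ to differ from that of $Q$, and add a new vertex $w$ adjacent to every vertex of $P\cup Q$. In any $(k,j)$-coloring one of $P,Q$ lies in $V_1$ and the other in $V_2$, so $w$ sees $N>k$ neighbors in $V_1$ and $N>j$ neighbors in $V_2$; thus $w$ can be placed in neither part, and the graph is non-$(k,j)$-colorable. The only cycles introduced are the $4$-cycles of the bipartite gadgets, so the girth is $4$, and one checks that the gadgets attach to $w$ and to one another in a planar fashion (each is planar and meets the rest only in its boundary vertices), so the whole graph is planar. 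The degenerate case $k=j=0$ (proper $2$-coloring) is handled separately by any non-bipartite planar graph of girth $4$, for instance a $7$-cycle with one chord closing a $4$-cycle.

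For parts (2) and (3) the girth requirements ($5$ and $7$) forbid the $4$-cycles on which the equality gadget relies, so these cannot use $K_{2,m}$ and instead require dedicated gadgets free of short cycles, or ad hoc finite configurations. My plan is to exhibit explicit planar graphs of girth $5$ and $7$ and to certify non-$(3,1)$- and non-$(2,0)$-colorability by a structured case analysis on the colors of a few high-degree vertices, possibly supported by a computer check. The main obstacle lies precisely here: these targets sit right at the boundary of the positive results in Table~\ref{nearColoring-tab01}, so the graphs must be both delicate and economical, and the difficulty is simultaneously finding them and certifying non-colorability under the tight girth constraint. By contrast, for part (1) the only real subtlety is forcing \emph{inequality} of colors without an absolute color reference — something the color-swap symmetry forbids when $k=j$ — which is resolved above by the capacity-overflow trick rather than by fixing colors outright.
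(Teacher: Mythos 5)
Your argument for part (1) is correct: the $K_{2,k+j+1}$ equality gadget is exactly the paper's $H_{x,y}$, and your counting proof of the forcing property (at most $k$ common neighbors can join $x$'s class and at most $j$ can join $y$'s) is right. Your superstructure is more elaborate than necessary, though: the paper simply attaches $k+2$ equality gadgets from a single vertex $u$ to the $k+2$ vertices of a star $S$, so that $S$ becomes monochromatic and its center acquires $k+1$ like-colored neighbors, which already exceeds both caps when $k\ge j$. Your version with blocks, a capacity-overflow inequality gadget, and an overloaded vertex $w$ also works (monochromaticity, the inequality forcing, the final overload, planarity and girth~4 all check out), so for part (1) you have a legitimate, if longer, alternative.

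The genuine gap is in parts (2) and (3), where you give no construction at all: you only announce an intention to exhibit explicit planar graphs of girth 5 and 7 and to certify non-colorability by a case analysis, possibly computer-assisted. That is a plan, not a proof, and as you yourself note, this is exactly where the difficulty of the claim lies, since these girths sit at the boundary of the known positive results. The paper resolves it with concrete gadgets whose properties are verified by hand. For (2), it uses a girth-5 gadget $H_{x,y}$ that admits no extension when $x$ and $y$ are both colored $3$ with no neighbor colored $3$; seven parallel copies between $z$ and each of $r,s,t$ guarantee that in at least one copy both endpoints are in that forbidden state, and three such pieces joined by the path $z_1z_2z_3$ force the required precoloring to occur somewhere (a path on three vertices must contain a vertex colored $3$). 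For (3), it builds a cascade: a gadget $T_{x,y,z}$ forcing a saturated vertex, a 7-cycle filling $S$ that kills a particular boundary coloring, a wheel-like graph $H_z$ exploiting the non-2-colorability of a 7-cycle, and finally six copies glued to an edge $uv$ so that some copy receives the fatal precoloring. Without constructions of this kind, and a verification of each forcing step, parts (2) and (3) of the claim remain unproved in your write-up.
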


Claim~\ref{claim}.3 shows that the $(2,0)$-colorability of all planar
graphs with girth at least 8~\cite{BK2011} is a tight result.

\begin{theorem}{\ }
\label{2c}
Let $k$, $j$, and $g$ be fixed integers such that $(k,j)\ne(0,0)$ and $g\ge 3$.
Either every planar graph with girth at least $g$ is $(k,j)$-colorable
or it is NP-complete to determine whether a planar graph with girth at least $g$ is $(k,j)$-colorable.
\end{theorem}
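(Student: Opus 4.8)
Since the statement is a dichotomy, I would first dispose of the easy half, membership in NP: given a planar graph $G$ and a candidate partition $(V_1,V_2)$, one checks in linear time that $\Delta(G[V_1])\le k$ and $\Delta(G[V_2])\le j$, so $(k,j)$-colorability of planar graphs with girth at least $g$ lies in NP for every fixed $k,j,g$. It then suffices to prove the hardness half in the following conditional form: \emph{if} not every planar graph with girth at least $g$ is $(k,j)$-colorable, \emph{then} the problem is NP-hard. Under that hypothesis I may fix one non-$(k,j)$-colorable planar graph $H$ of girth at least $g$ and, passing to a vertex-minimal such graph, assume that deleting any vertex of $H$ yields a $(k,j)$-colorable graph. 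The entire reduction will be built from this single black box $H$. Without loss of generality $k\ge j$, and since $(k,j)\ne(0,0)$ we have $k\ge 1$.

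For the hardness I would reduce from \textsc{Planar 3-SAT}, which is NP-complete and whose variable--clause incidence graph comes with a planar embedding; replacing each variable, clause, and incidence by a planar gadget placed inside the corresponding face keeps the instance planar, so no crossing gadget is needed. The heart of the construction is the use of $H$ as a \emph{rigidity engine}. Because forests are $(1,0)$-colorable, once $k\ge 1$ the degree constraints alone propagate essentially no information along paths, so the only available source of genuine forcing is the non-colorability of $H$. Accordingly I would make each clause gadget a copy of $H$ with three designated vertices $v_1,v_2,v_3$, wiring literal $i$ to $v_i$ so that a \emph{true} literal ``frees'' $v_i$ (supplies enough coloring freedom that the region colors like $H-v_i$), while a \emph{false} literal leaves $v_i$ in place. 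Vertex-minimality guarantees that each $H-v_i$ is $(k,j)$-colorable, so the gadget is colorable as soon as some literal is true, and it is an intact copy of $H$, hence uncolorable, exactly when all three literals are false. Each variable is realized by a small gadget with a terminal admitting two readable states (a $V_1$-saturated terminal versus a $V_2$ terminal), read consistently by all positive and negated occurrences. Girth at least $g$ is preserved because $H$ already has girth at least $g$ and every inter-gadget edge is subdivided into a long path, which neither creates a short cycle nor, for $k\ge1$, leaks constraints.

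Assembling these pieces, the planar graph $G_\phi$ of girth at least $g$ should be $(k,j)$-colorable if and only if $\phi$ is satisfiable: a satisfying assignment sets each variable gadget to the matching state, frees one $v_i$ in every clause, and extends to a global coloring using the colorings of the various $H-v_i$; conversely a $(k,j)$-coloring reads off a consistent truth value at each variable, and if some clause had all literals false its gadget would be an intact copy of $H$ and uncolorable, a contradiction, so the induced assignment satisfies $\phi$. As the reduction is polynomial, NP-hardness, and hence NP-completeness, follows.

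The main obstacle, where almost all the work lies, is making the gadgets \emph{uniform in $(k,j)$} and proving the forcing and consistency properties with only maximum-degree constraints in hand. I expect the delicate points to be: (i) engineering a variable gadget that really has two readable states and no cheating third state, including the symmetric regime $k=j$ where the two color classes cannot be told apart globally; (ii) certifying that ``freeing $v_i$'' genuinely lets the clause region color like $H-v_i$ while leaving $v_i$ in place genuinely reconstructs $H$'s obstruction, for \emph{every} admissible pair $(k,j)$ rather than a few small ones; and (iii) checking that subdividing connectors and identifying terminals never produces a cycle shorter than $g$ nor an unintended coloring. Carrying this out while relying only on the abstract existence of the single non-colorable graph $H$, rather than on an explicit family, is the crux of the argument.
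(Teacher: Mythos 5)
Your high-level framing (NP membership is easy; assume a non-$(k,j)$-colorable planar graph $H$ of girth at least $g$ exists, take it vertex-minimal, and use it as the sole source of forcing) matches the philosophy of the paper, but the construction you sketch does not go through, and the two places where it fails are exactly where the paper has to work hardest. First, your clause gadget assumes that an \emph{abstract} minimal non-colorable graph $H$ has three vertices $v_1,v_2,v_3$ that external wiring can either ``free'' (so the region colors like $H-v_i$) or ``leave in place'' (so the obstruction of $H$ is reconstructed). Minimality only tells you that $H-v$ is $(k,j)$-colorable for each $v$; it gives no control over which colorings of $H-v$ appear at the neighbours of $v$, and no mechanism by which an attached wire can simulate the deletion or non-deletion of $v_i$. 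Second, and more fatally, your wiring contradicts your own observation: you note that for $k\ge1$ degree constraints propagate essentially no information along paths, yet you connect variables to clauses by long subdivided paths. Such a path absorbs any coloring of its endpoints, so it cannot carry a truth value; the girth is preserved but the reduction collapses. The same problem afflicts the variable gadget, whose ``two readable states'' are asserted rather than built (and in the regime $k=j$ the two color classes are globally interchangeable, so a state cannot even be named without breaking that symmetry somehow).

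The paper resolves both difficulties by first proving (Lemma~\ref{2deg}) that one can choose a minimal non-$(k,j)$-colorable planar graph of the extremal girth $g_{k,j}$ containing a vertex $x$ of degree 2 on a path $uxv$; for the hard cases ($g_{k,j}=5$ with $j\ge1$) this requires a reducible-configuration-plus-discharging argument. Minimality then yields a genuine two-terminal forcing gadget: every $(k,j)$-coloring of $G_{k,j}\setminus\{x\}$ gives $u$ and $v$ distinct \emph{saturated} colors. Rather than encoding SAT directly, the paper uses copies of this gadget to saturate neighbours of each vertex of an instance and thereby reduce $(k,j)$-coloring to $(k-t,j-t)$-coloring with $t=\min(k-1,j)$, bottoming out at the known NP-complete base cases of $(1,0)$-coloring of planar graphs and $(1,1)$-coloring of triangle-free planar graphs. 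To salvage your plan you would need to extract some comparably concrete local forcing property from $H$ before any gadget can transmit information; as written, the proposal is a program whose central steps are missing.
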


\begin{theorem}{\ }
\label{3c}
Let $k$ be a fixed integer.
It is NP-complete to determine whether a planar graph that is either $(0,0,0)$-colorable or non-$(k,k,1)$-colorable is $(0,0,0)$-colorable.
\end{theorem}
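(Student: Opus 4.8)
The plan is to establish membership in NP, which is immediate, and then to reduce from the classical NP-complete problem of planar $3$-colorability (equivalently, $(0,0,0)$-colorability of planar graphs, shown NP-complete by Garey, Johnson, and Stockmeyer). For membership, note that a proper $3$-coloring is a polynomial-size certificate and that, under the promise, the yes-instances are exactly the $(0,0,0)$-colorable graphs; verifying a candidate $(0,0,0)$-coloring in polynomial time therefore suffices. Consequently the entire difficulty is concentrated in the hardness reduction, whose job is to convert the flexible $(k,k,1)$-coloring regime into a rigid proper $3$-coloring.

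Given a planar instance $G$ of $3$-colorability, I would construct a planar graph $G'$ by replacing each edge $uv$ of $G$ with a copy of a planar \emph{difference gadget} $L_k$ having two terminals $s,t$ that are identified with $u$ and $v$. The gadget is required to satisfy two properties: (i) in every $(k,k,1)$-coloring, the terminals $s$ and $t$ receive distinct colors; and (ii) for each of the (ordered) pairs of distinct colors assigned to $s,t$, the gadget admits a proper $3$-coloring extending that assignment. I would draw $L_k$ inside a disk with $s$ and $t$ on its boundary, so that gluing one copy per edge of the plane graph $G$ keeps $G'$ planar. Correctness then follows cleanly. If $G$ has a proper $3$-coloring, property (ii) extends it across every gadget, so $G'$ is $(0,0,0)$-colorable. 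Conversely, in any $(k,k,1)$-coloring of $G'$, property (i) forces the endpoints of each original edge to differ, so the restriction to $V(G)$ is a proper $3$-coloring of $G$; hence $G'$ being $(k,k,1)$-colorable already implies that $G$ is $3$-colorable, which by the first direction implies that $G'$ is $(0,0,0)$-colorable. This single implication, ``$(k,k,1)$-colorable $\Rightarrow$ $(0,0,0)$-colorable'', is exactly what makes every produced $G'$ satisfy the promise, while at the same time giving that $G'$ is $(0,0,0)$-colorable if and only if $G$ is $3$-colorable.

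The main obstacle is the construction of $L_k$ together with the verification of property (i). Because the color classes $1$ and $2$ tolerate up to $k$ monochromatic neighbors, a single edge cannot force $s\ne t$, so the constraint must be \emph{amplified}. I would build $L_k$ from the non-$(k,k,1)$-colorable planar graphs whose existence is guaranteed in the introduction (the $k$-analogues of the graphs underlying the optimality of the $(2,2,2)$-theorem, in the same spirit as Claim~\ref{claim}): starting from such a graph $F$, I would isolate a sub-configuration whose colorability hinges on two designated vertices not sharing a color, split it to expose the terminals $s,t$, and attach private copies of $F$-fragments at $s$ and at $t$ whose role is to consume the degree budgets of these vertices in colors $1$, $2$, and (using its tight budget $1$) color $3$.

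The delicate point, which I expect to be the hardest part, is the tension between properties (i) and (ii): the saturating fragments must be \emph{inert} when $s\ne t$ (so that the gadget remains properly $3$-colorable, as (ii) demands) yet \emph{active} when $s=t$ (so that a monochromatic pair can no longer be completed to a $(k,k,1)$-coloring, as (i) demands). Reconciling rigidity and flexibility in a single planar gadget of constant size, independent of $G$, is the crux of the argument. I would resolve it by a case analysis on the color of $s$ (the symmetric cases $1$ and $2$ versus the special color $3$), exploiting the budget $1$ of color $3$ to bootstrap the forcing, and checking planarity of the assembled gadget so that the global graph $G'$ remains planar after all gluings.
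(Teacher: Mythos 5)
Your high-level framework is exactly the one the paper uses: reduce from planar $3$-colorability, attach to each edge $uv$ of $G$ a constant-size planar gadget that (a) extends any proper $3$-coloring in which $u$ and $v$ differ, and (b) admits no $(k,k,1)$-coloring in which $u$ and $v$ agree, and observe that the single implication ``$(k,k,1)$-colorable $\Rightarrow(0,0,0)$-colorable'' simultaneously yields the promise and the equivalence. That part of your write-up is correct and well organized.

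The genuine gap is that the gadget itself --- your $L_k$ with properties (i) and (ii) --- is never constructed; you explicitly identify it as ``the crux of the argument'' and then only describe an intention (``isolate a sub-configuration\ldots attach private copies of $F$-fragments\ldots consume the degree budgets''). This is precisely where all the mathematical content of the theorem lives, and the sketched plan does not obviously succeed: you cannot ``consume the degree budget'' of a terminal in a color it does not carry, so forcing saturation of $s$ and $t$ in all three colors simultaneously via attached fragments is not a workable mechanism, and the two colors of improperty $k$ must be handled by a completely different argument than the color of improperty $1$. The paper resolves this with a two-stage amplification: a gadget $E$ containing a long path dominated by $a$, $b$ and by certain vertices $x_0,y_0,x_{3k+3+t}$; then $E'$, made of $2k-1$ copies of $E$ sharing the edge $ab$, for which a pigeonhole argument plus a counting argument along the path (every three consecutive path vertices contain one of color $k_2$, forcing at least $k+1$ such vertices against a budget of $k$) rules out $a$ and $b$ sharing a color of improperty $k$; and finally $E''$, which glues copies of $E'$ onto the three edges of a triangle $y_0x_0x_1$ so that if $a$ and $b$ both take the improperty-$1$ color, the triangle is forced into the two improperty-$k$ colors, creating a monochromatic edge that the attached $E'$ cannot tolerate. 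Until you exhibit a concrete $L_k$ and verify both properties (including that it remains properly $3$-colorable), the proof is a reduction scheme rather than a proof.
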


\bigskip
We construct
a non-$(k,j)$-colorable planar graph with girth 4 in Section 2,
a non-$(3,1)$-colorable planar graph with girth 5 in Section 3,
and a non-$(2,0)$-colorable planar graph with girth 7 in Section 4.
We prove Theorem~\ref{2c} in Section 5 and we prove Theorem~\ref{3c} in Section 6.

%
%

\paragraph{Notations.} In the following, when we consider a
$(d_1,\ldots,d_k)$-coloring of a graph $G$, we color the vertices of
$V_i$ with color $d_i$ for $1\le i \le k$: for example in a
$(3,0)$-coloring, we will use color 3 to color the vertices of $V_1$
inducing a subgraph with maximum degree 3 and use color 0 to color the
vertices of $V_2$ inducing a stable set. A vertex is said to be
\emph{colored $i^j$} if it is colored $i$ and has $j$ neighbors colored $i$,
that is, it has degree $j$ in the subgraph induced by its color.
A vertex is \emph{saturated} if it is colored $i^i$, that is, if it has maximum degree
in the subgraph induced by its color.
A cycle (resp. face) of length $k$ is called a $k$-cycle (resp. $k$-face).
A $k$-vertex (resp. $k^-$-vertex, $k^+$-vertex) is a vertex of degree $k$ (resp. at
most $k$, at least $k$). The minimum degree of a graph $G$ is denoted by $\delta(G)$.

\section{A non-$(k,j)$-colorable planar graph with girth 4}
\label{g4}

We construct a non-$(k,j)$-colorable planar graph $G$ with girth 4,
with $k\ge j\ge 0$. Let $H_{x,y}$ be a copy of $K_{2,k+j+1}$, as depicted in Figure~\ref{figkj}.
In any $(k,j)$-coloring of $H_{x,y}$, the vertices $x$ and $y$ must receive the same color.
We obtain $G$ from a vertex $u$ and a star $S$ on $k+2$ vertices $v_1,\ldots,v_{k+2}$
(where $v_1$ is the center of $S$) by linking $u$ to each vertex $v_i$ with a copy $H_{u,v_i}$ of $H_{x,y}$.
The graph $G$ is not $(k,j)$-colorable: by the property of $H_{x,y}$, all the vertices $v_i$
should get the same color as $u$. This gives a monochromatic $S$, which is forbidden.
Notice that $G$ is $K_4$-minor free and 2-degenerate.
\begin{figure}[htbp]
\begin{center}
\includegraphics[width=10cm]{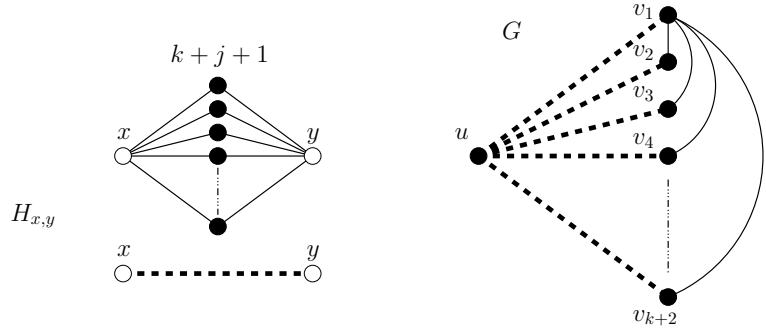}
\caption{A non-$(k,j)$-colorable $K_4$-minor free graph with girth 4.}
\label{figkj}
\end{center}
\end{figure}

\section{A non-$(3,1)$-colorable planar graph with girth 5}
\label{g5}

We construct a non-$(3,1)$-colorable planar graph with girth 5.
Consider the graph $H_{x,y}$ depicted in Figure~\ref{fig31}.
If $x$ and $y$ are colored 3 but have no neighbor colored 3, then it is not
possible to extend this partial coloring to $H_{x,y}$. Now, we
construct the graph $S_{z,r,s,t}$ as follows. Let $z$ be a vertex and
$rst$ be a path on three vertices. Take seven copies $H_{x_1,y_1},\ldots,H_{x_7,y_7}$
of $H_{x,y}$ and identify all $x_i$ ($i=1...7$)
with $z$ and all $y_i$ ($i=1...7$) with $r$. Repeat this construction
between $z$ and $s$, and between $z$ and $t$. Finally we obtain $G$ by
taking three copies of $S_{z,r,s,t}$, say
$S_{z_1,r_1,s_1,t_1},S_{z_2,r_2,s_2,t_2},S_{z_3,r_3,s_3,t_3}$, and by
adding an edge between $z_1$ and $z_2$, and $z_2$ and $z_3$ (Figure~\ref{fig31}).
The obtained graph is planar and has girth 5. Moreover,
it is not $(3,1)$-colorable. To see this, observe that a path on three
vertices must contain a vertex colored 3. Hence one of
$z_1,z_2,z_3$ is colored 3, as well, one $r_1,s_1,t_1$
(resp. $r_2,s_2,t_2$, $r_3,s_3,t_3$) must be colored 3. Suppose,
for example, that we have $z_1$ and $t_1$ colored 3. Between
these two vertices we have seven copies of $H_{x,y}$. It follows that
in one of these copies, $z_1$ and $t_1$ have no neighbors colored 3.
Consequently, this copy of $H_{x,y}$ is not $(3,1)$-colorable, and thus
$G$ is not $(3,1)$-colorable.

\begin{figure}[htbp]
\begin{center}
\includegraphics[width=8cm]{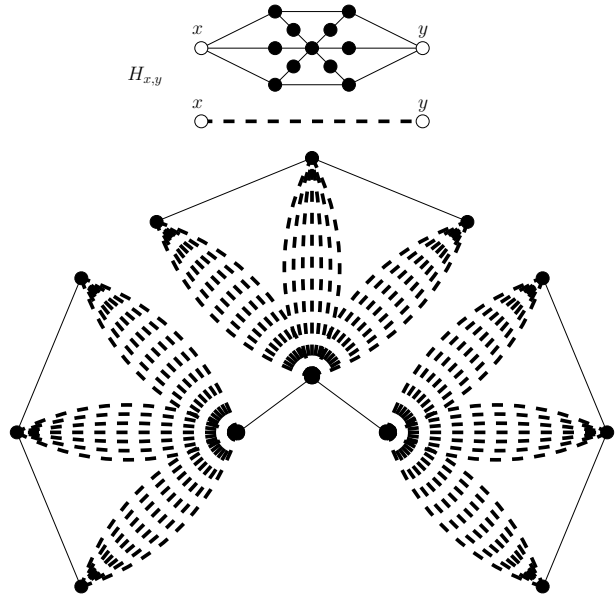}
\caption{A non-$(3,1)$-colorable 2-outerplanar graph with girth 5.}
\label{fig31}
\end{center}
\end{figure}

Notice that $G$ is 2-outerplanar. Moreover, $G$ is 2-degenerate, this fact will be useful for the proof of Theorem~\ref{2c}.

\section{A non-$(2,0)$-colorable planar graph with girth 7}
\label{g7}

We give the construction of a non-$(2,0)$-colorable planar graph with girth 7.
Consider the graph $T_{x,y,z}$ in Figure~\ref{fig20-1}.
If the vertices $x$, $y$, and $z$ are colored 2 and have no neighbor colored 2, then $w$ is colored $2^2$.
Consider now the graph $S$ in Figure~\ref{fig20-1}.
Suppose that $a,b,c,d,e,f,g$ are respectively colored 2, 0, 2, 2, 2, 2, 0,
and that $a$ has no neighbor colored 2. Using successively the property of $T_{x,y,z}$,
we have that $w_1$, $w_2$, and $w_3$ must be colored $2^2$. It follows
that $w_4$ is colored 0, $w_5$ is colored 2, and so $w_6$ is colored $2^2$.
Again, by the property of $T_{x,y,z}$, $w_7$ must be colored $2^2$.
Finally, $w_8$ must be colored 0 and there is no choice of color for $w_9$.
Hence, such a coloring of the outer 7-cycle $abcdefg$ cannot be extended.

\begin{figure}[htbp]
\begin{center}
\includegraphics[width=8cm]{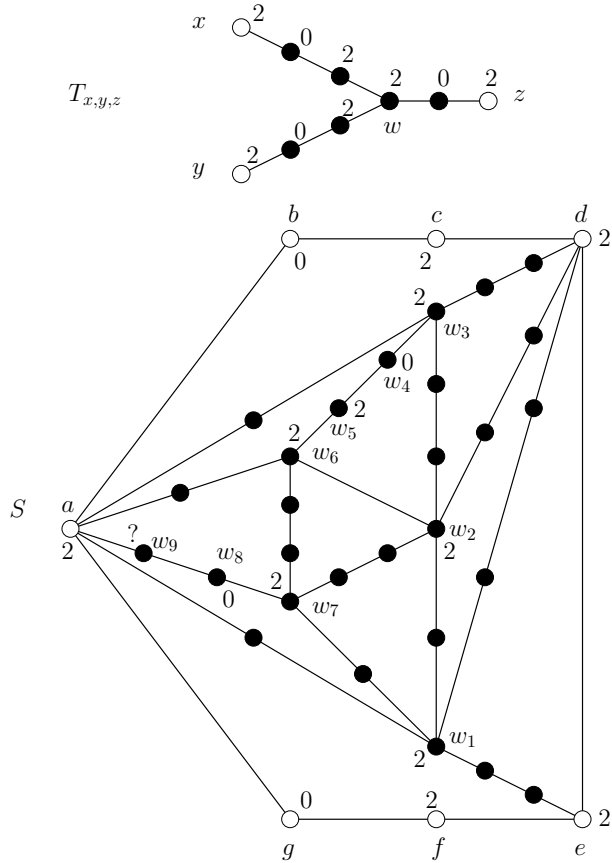}
\caption{Graphs $T_{x,y,z}$ and $S$.}
\label{fig20-1}
\end{center}
\end{figure}

The graph $H_z$ depicted in Figure~\ref{fig20-2} is
obtained from a vertex $z$ and a 7-cycle $v_1..v_7$ by linking $z$
to each $v_i$ ($i=1..7$) with a path on four vertices and by
embedding the graph $S$ in each face $F_i$ ($i=1..7$) (by identifying
the outer 7-cycle of $S$ with the 7-cycle bording the 7-face $F_i$).

Finally, the graph $G$ is obtained from a path on two vertices $uv$ and
six copies $H_{z_1},..,H_{z_6}$ of $H_z$ by identifying
$z_1,z_2,z_3$ with $u$ and $z_4,z_5,z_6$ with $v$. Observe that $G$
is planar and has girth 7. Let us prove that $G$ is not $(2,0)$-colorable:

\begin{enumerate}
\item $u$ and $v$ cannot be both colored 0, so without loss of generality, $u$ is colored 2.
\item In one of the three copies of $H_z$ attached to $u$, say $H_{z_1}$, $u$ has no
 neighbor colored 2.
\item Since a 7-cycle is not 2-colorable, the 7-cycle $v_1....v_7$ of
 $H_{z_1}$ has a monochromatic edge colored 2, say $v_1v_2$.
\item At last, the coloring of the face $F_2$ cannot be extended to the
 copy of $S$ embedded in $F_2$.
\end{enumerate}

\begin{figure}[htbp]
\begin{center}
\includegraphics[width=12cm]{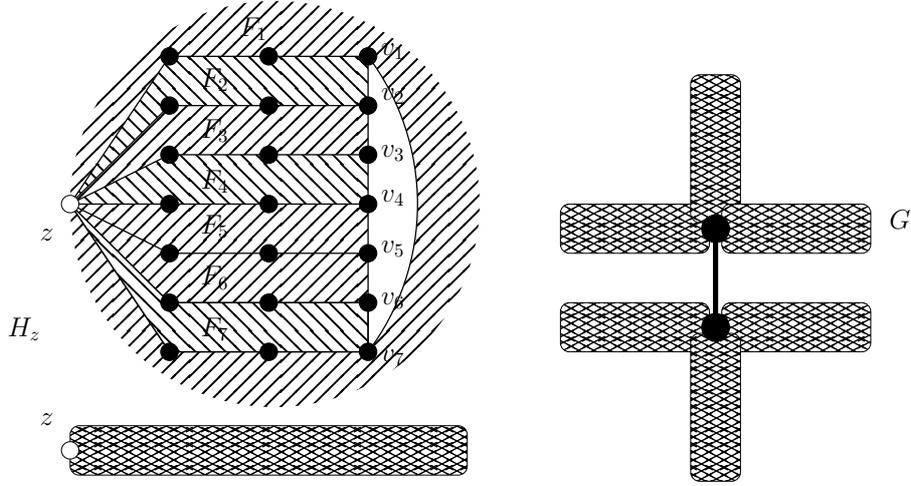}
\caption{Graphs $H_{z}$ and $D$.}
\label{fig20-2}
\end{center}
\end{figure}

\section{NP-completeness of $(k,j)$-colorings}

\begin{figure}[htbp]
\begin{center}
\includegraphics[width=6cm]{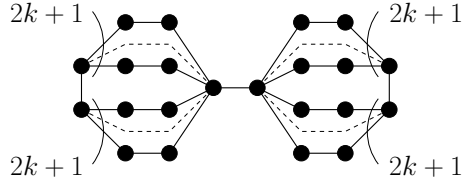}
\caption{A non-$(k,0)$-colorable planar graph with girth 6~\cite{BIMR09}.}
\label{figM6}
\end{center}
\end{figure}

Let $g_{k,j}$ be the largest integer $g$ such that there exists a planar graph with girth $g$ that is not $(k,j)$-colorable.
Because of large odd cycles, $g_{0,0}$ is not defined. For $(k,j)\ne(0,0)$, we have $4\le g_{k,j}\le11$ by the example in Figure~\ref{figkj}
and the result that planar graphs with girth at least 12 are $(0,1)$-colorable~\cite{BK11}. 
We prove this equivalent form of Theorem~\ref{2c}:
\begin{theorem}{\ }
\label{2c'}
Let $k$ and $j$ be fixed integers such that $(k,j)\ne(0,0)$.
It is NP-complete to determine whether a planar graph with girth $g_{k,j}$ is $(k,j)$-colorable.
\end{theorem}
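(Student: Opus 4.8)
The plan is to prove NP-completeness by reduction from a known NP-complete problem, most naturally from 3-SAT or from planar graph 3-colorability (more precisely, from the $(k,j)$-colorability of planar graphs with girth less than $g_{k,j}$, or from a suitably chosen NP-complete coloring problem). The key idea is that membership in NP is immediate: given a partition $V_1,V_2$, one can check in polynomial time that $G[V_1]$ has maximum degree at most $k$ and $G[V_2]$ has maximum degree at most $j$. The substance of the proof is NP-hardness, and this is where the gadgets constructed in Sections 2--4 come into play.

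First I would fix the value $g_{k,j}$ and recall that, by its definition, there exists a non-$(k,j)$-colorable planar graph $N$ of girth exactly $g_{k,j}$, while every planar graph of girth strictly greater than $g_{k,j}$ is $(k,j)$-colorable. The reduction would start from an NP-complete source problem whose instances can be encoded by a planar graph, and I would build gadgets that enforce logical constraints. The crucial building block is a \emph{forcing gadget}: a piece of planar graph of girth $g_{k,j}$ attached at a designated vertex (or small set of vertices) that, in any $(k,j)$-coloring, forces that vertex into a prescribed color or a prescribed saturation state (for instance, ``colored $i$ with no neighbor of color $i$'', analogous to the property established for $H_{x,y}$ in Section~3 and for $T_{x,y,z}$ and $S$ in Section~4). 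Since the non-colorable examples are obtained by gluing such forcing gadgets so that an unavoidable conflict arises, the same gadgets can be used as \emph{constraint enforcers} in a reduction: by attaching them selectively, one can simulate clauses or adjacency constraints while keeping the whole graph planar and of girth $g_{k,j}$.

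The reduction would then proceed by the standard template: introduce variable gadgets whose $(k,j)$-colorings correspond to truth assignments, clause gadgets that are $(k,j)$-colorable if and only if the clause is satisfied, and connecting paths (again of girth $g_{k,j}$, using the degeneracy of the gadgets noted in Sections~2 and~3 to safely interconnect pieces without creating short cycles). One verifies that the total graph $G_\phi$ built from an instance $\phi$ is planar, has girth exactly $g_{k,j}$, has size polynomial in $|\phi|$, and is $(k,j)$-colorable if and only if $\phi$ is satisfiable. Completeness (satisfiable $\Rightarrow$ colorable) is checked by reading off a coloring from a satisfying assignment; soundness (colorable $\Rightarrow$ satisfiable) follows from the forcing properties of the gadgets, which lock the variable and clause vertices into states that must be consistent with some assignment.

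The hard part will be designing the gadgets so that they simultaneously (i) respect the girth lower bound $g_{k,j}$ everywhere, including at the junctions where gadgets meet and where interconnecting paths close up cycles, and (ii) transmit a binary or logical signal faithfully in \emph{both} directions of the coloring correspondence, since a $(k,j)$-coloring offers many more local states than a proper coloring (a vertex colored $i$ may have between $0$ and $d_i$ like-colored neighbors), so the gadgets must pin down not merely the color but the relevant saturation status, as in the $i^i$ conditions exploited in Sections~3 and~4. Controlling this richer state space while preserving planarity and girth, and handling the full range of parameters $(k,j)\ne(0,0)$ uniformly, is the principal technical obstacle; I would expect to treat a few small or boundary cases of $(k,j)$ separately and to reuse the explicit non-colorable constructions of Sections~2--4 as the engine of the forcing gadgets.
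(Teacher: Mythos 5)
Your proposal is a reduction template rather than a proof: the entire technical content (the gadgets) is deferred, and the route you sketch for obtaining them cannot work for most pairs $(k,j)$. You propose to ``reuse the explicit non-colorable constructions of Sections~2--4 as the engine of the forcing gadgets,'' but those constructions exist only for a handful of pairs (girth~4 for all $(k,j)$, girth~6 for $(k,0)$, girth~5 for $(3,1)$, girth~7 for $(2,0)$). For a general pair --- say $(2,2)$, where it is open whether planar graphs of girth~5 are colorable --- neither the value of $g_{k,j}$ nor any explicit non-$(k,j)$-colorable planar graph of girth $g_{k,j}$ is known, so there is nothing to build your variable and clause gadgets from. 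This is exactly why the theorem is stated as a dichotomy. The paper's key idea, which your proposal misses, is to obtain the forcing gadget \emph{non-constructively}: Lemma~\ref{2deg} shows (via a discharging argument for the remaining girth-5 cases) that some planar graph $G_{k,j}$ of girth $g_{k,j}$ that is minimally non-$(k,j)$-colorable contains a path $uxv$ with $x$ a 2-vertex. By minimality, $G_{k,j}\setminus\{x\}$ \emph{is} colorable, and every coloring must give $u$ and $v$ distinct saturated colors (otherwise the coloring would extend to $x$). That single property is the whole engine, and it is available uniformly for every pair without knowing $g_{k,j}$ or an extremal example.

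The second divergence is the target of the reduction. You reduce from 3-SAT via variable/clause gadgets, which forces you to encode Boolean logic in the rich state space of defective colorings --- a difficulty you name but do not resolve. The paper instead reduces from coloring problems already known to be NP-complete: $(k,0)$ reduces to $(1,0)$-coloring (NP-complete by~\cite{EMOP11}) by attaching $k-1$ saturated pendant gadgets to each vertex so as to burn down the improperty budget; $(1,1)$ reduces from the known NP-completeness of $(1,1)$-coloring of triangle-free planar graphs~\cite{FJLS03,FH03} via an edge gadget $E_{a,b}$; and general $(k,j)$ reduces to $(k-t,j-t)$ with $t=\min(k-1,j)$, landing on one of the two base cases. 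This budget-reduction chain sidesteps clause logic entirely. Without the minimality trick and without a concrete base problem to reduce from, your outline has no path to completion as written.
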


Let us define the partial order $\preceq$. Let $n_3(G)$ be the number
of vertices of degree at least 3 in $G$. For any two graphs $G_1$ and $G_2$, we
have $G_1\prec G_2$ if and only if at least one of the following
conditions holds:
\begin{itemize}
\item $|V(G_1)|<|V(G_2)|$ and $n_3(G_1)\le n_3(G_2)$.
\item $n_3(G_1)<n_3(G_2)$.
\end{itemize} 
Note that the partial order $\preceq$ is well-defined
and is a partial linear extension of the subgraph poset.

The following lemma is useful.
%
%

\begin{lemma}{\ }
\label{2deg}
Let $k$ and $j$ be fixed integers such that $(k,j)\ne(0,0)$.
There exists a planar graph $G_{k,j}$ with girth $g_{k,j}$, minimally non-$(k,j)$-colorable for the subgraph order, such that $\delta(G_{k,j})=2$.
\end{lemma}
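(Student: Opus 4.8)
The plan is to produce $G_{k,j}$ as an extremal non-$(k,j)$-colorable graph and then read off $\delta(G_{k,j})=2$ from its minimality. The starting point is that $g_{k,j}$ is the \emph{largest} girth of a non-$(k,j)$-colorable planar graph: taking a subgraph can only increase the girth, and by maximality no non-$(k,j)$-colorable planar graph has girth larger than $g_{k,j}$, so \emph{every} non-$(k,j)$-colorable planar subgraph of a girth-$g_{k,j}$ graph again has girth exactly $g_{k,j}$. Fixing a non-$(k,j)$-colorable planar graph of girth $g_{k,j}$ (which exists by definition of $g_{k,j}$) and, among all such graphs, letting $G_{k,j}$ be one minimal for the subgraph order and, among these, minimal for $\preceq$, I obtain planarity, girth $g_{k,j}$, and subgraph-minimality for free. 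Minimality for the subgraph order yields the criticality facts I will use: $G_{k,j}-v$ and $G_{k,j}-e$ are $(k,j)$-colorable for every vertex $v$ and edge $e$; the refinement to a $\preceq$-minimal graph, in which deleting an edge incident to a $3$-vertex strictly decreases $n_3$, is what will let me run an extremal argument in the low-girth cases.

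I then squeeze $\delta(G_{k,j})$ between $2$ and $3$. For the lower bound, suppose some vertex $v$ has degree at most $1$; by minimality $G_{k,j}-v$ has a $(k,j)$-coloring, and since $(k,j)\ne(0,0)$ there are two color classes, so I may place $v$ in the class avoiding the color of its unique neighbor (if any). Then $v$ has no neighbor of its own color and saturates no neighbor, so the coloring extends to $G_{k,j}$, a contradiction; hence $\delta(G_{k,j})\ge 2$. For the upper bound, $(k,j)\ne(0,0)$ forces $g_{k,j}\ge 4$ (Section~\ref{g4}), and every planar graph of girth at least $4$ satisfies $|E|\le 2(|V|-2)$, so its average degree is below $4$; thus $\delta(G_{k,j})\le 3$, and if $\delta(G_{k,j})=3$ then $G_{k,j}$ contains a vertex of degree exactly $3$.

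It remains to rule out $\delta(G_{k,j})=3$. When $g_{k,j}\ge 6$ this is immediate from Euler's formula: girth at least $6$ gives $|E|\le\tfrac32(|V|-2)$, which is incompatible with the bound $|E|\ge\tfrac32|V|$ coming from $\delta\ge 3$, so no planar graph of girth at least $6$ has minimum degree $3$, and $\delta(G_{k,j})=2$ follows. The genuinely remaining cases are $g_{k,j}\in\{4,5\}$, where planar graphs of minimum degree $3$ do exist. Here I use criticality: I take a $3$-vertex $v$ with neighbors $u_1,u_2,u_3$, together with $(k,j)$-colorings of $G_{k,j}-v$ and of each $G_{k,j}-vu_i$. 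Each such coloring fails to be a coloring of $G_{k,j}$ only through a controlled defect at $v$ — either too many of $u_1,u_2,u_3$ share the color of $v$, or a neighbor $u_i$ sharing the color of $v$ is already saturated — and the aim is to recolor along a short chain around $v$ to repair the defect and obtain a coloring of $G_{k,j}$, contradicting non-$(k,j)$-colorability.

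The main obstacle is precisely this recoloring step for girth $4$ and $5$, which must be carried out uniformly in $(k,j)$: the way a partial coloring is blocked at $v$ changes qualitatively between small $k,j$ (obstruction by the number of equally colored neighbors, possible only when $k$ or $j$ is at most $2$) and large $k,j$ (obstruction only by saturation of a neighbor). I expect to handle it by a bounded case analysis on the colors and saturation statuses of $v,u_1,u_2,u_3$, exploiting the two available classes and the extra freedom supplied by deleting each incident edge, with the well-founded order $\preceq$ guaranteeing that the auxiliary graphs produced during the recoloring are $\prec$-smaller and hence colorable. By contrast, all larger-girth instances are dispatched cleanly by the single Euler inequality above.
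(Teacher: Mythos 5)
Your opening moves match the paper: $\delta(G_{k,j})\ge 2$ from subgraph-minimality, the observation that a non-colorable planar subgraph of a girth-$g_{k,j}$ graph still has girth exactly $g_{k,j}$, and the disposal of all pairs with $g_{k,j}\ge 6$ by Euler's formula (the paper phrases this as ``planar graphs of girth at least $6$ are $2$-degenerate,'' which is the same inequality). The problem is that everything after that --- the cases $g_{k,j}\in\{4,5\}$ --- is exactly where the content of the lemma lies, and you leave it as an unexecuted plan (``I expect to handle it by a bounded case analysis''). Worse, the plan as stated will not go through: if $v$ is a $3$-vertex whose neighbors have large degree, then in a $(k,j)$-coloring of $G_{k,j}-v$ every neighbor sharing the color you want to give $v$ may already be saturated, and no amount of recoloring confined to $v,u_1,u_2,u_3$ and the colorings of $G-v$, $G-vu_i$ repairs that; you need control on the degrees of the $u_i$. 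For girth $4$ no such control is available (the maximum average degree bound is only $<4$, so one cannot force a $3$-vertex with low-degree neighbors by counting), so your approach has no way to finish the $g_{k,j}=4$ case at all.

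The paper avoids both difficulties with two ingredients absent from your write-up. First, it never runs a reducibility argument at girth $4$: since the explicit non-$(k,j)$-colorable examples of girth $4$ (Section~\ref{g4}), of girth $5$ for $(k,1)$ with $k\le 3$ (Section~\ref{g5}), and of girth $6$ for $(k,0)$ are all $2$-degenerate, any subgraph-minimal non-colorable subgraph of them automatically has a $2$-vertex; this settles $g_{k,j}=4$ and several girth-$5$ families outright. Second, for the remaining pairs (all with $g_{k,j}=5$, split into $k\ge4,\,j=1$ and $2\le j\le k$) the reducible configuration is not a bare $3$-vertex but a $3$-vertex $a$ \emph{all three of whose neighbors have degree at most $4$}; its existence in a graph with no $2$-vertex follows from a discharging argument giving ${\rm mad}\ge\frac{10}{3}$ otherwise, which contradicts girth $5$. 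Even then the extension is delicate: for $k\ge4,\,j=1$ one colors $a$ with the improperty-$k$ color (a degree-$4$ neighbor cannot be $k$-saturated), while for $2\le j\le k$ one must pass to the auxiliary graph obtained by replacing $a$ with three $2$-vertices $b_1,b_2,b_3$ joined to the pairs $\{a_2,a_3\},\{a_1,a_3\},\{a_1,a_2\}$ --- and it is precisely here that minimality for $\preceq$ (rather than for the subgraph order) is needed, since this auxiliary graph is not a subgraph of $G$. Your proposal gestures at $\preceq$ but does not identify this gadget, the degree restriction on the $u_i$, or the discharging that produces the configuration, so the hard half of the lemma remains unproved.
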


\begin{proof}
 We have $\delta(G_{k,j})\ge 2,$ since a non-$(k,j)$-colorable graph that is minimal for the subgraph order does not contain a vertex of degree at most 1.
 Notice that if for some pair $(k,j)$ we construct a 2-degenerate non-$(k,j)$-colorable planar graph with girth $g_{k,j}$,
 that is not necessarily minimal for the subgraph order,
 then the case of this pair $(k,j)$ is settled since some subgraph of such a graph must be minimal and have minimum degree 2.
 In particular, this proves the lemma for the following pairs $(k,j)$:
\begin{itemize}
 \item Pairs $(k,j)$ such that $g_{k,j}\le 4$: We actually have $g_{k,j}=4$ because of the example in Section~\ref{g4}, which is 2-degenerate.
 \item Pairs $(k,j)$ such that $g_{k,j}\ge 6$: Because a planar graph with girth at least 6 is 2-degenerate. In particular,
    the example in Figure~\ref{figM6} shows that $g_{k,0}\ge 6$, so the lemma is proved for all pairs $(k,0)$.
 \item Pairs $(k,1)$ such that $1\le k\le 3$: If $g_{k,j}\ge 6$, then we are in a previous case.
    Otherwise, we have $g_{k,j}=5$ because of the example in Section~\ref{g5}, and the lemma holds in this case since this example is 2-degenerate.
\end{itemize}

So all the remaining pairs satisfy $g_{k,j}=5$. There are two types of remaining pairs $(k,j)$:
\begin{itemize}
 \item Type 1: $k\ge 4$ and $j=1$.
 \item Type 2: $2\le j\le k$.
\end{itemize}

Consider a planar graph $G$ with girth 5 that is minimally non-$(k,j)$-colorable for the order $\preceq$ and suppose for contradiction
that $G$ does not contain a 2-vertex.
Also, suppose that $G$ contains a 3-vertex $a$ adjacent to three vertices $a_1$, $a_2$, and $a_3$ of degree at most 4.
For colorings of type 1, we can extend to $G$ a coloring of $G\setminus\{a\}$ by assigning to $a$ the color of improperty at least 4.
For colorings of type 2, we consider the graph $G'$ obtained from $G\setminus\{a\}$ by adding three 2-vertices $b_1$, $b_2$, and $b_3$ adjacent to,
respectively, $a_2$ and $a_3$, $a_1$ and $a_3$, $a_1$ and $a_2$.
Notice that $G'\preceq G$, so $G'$ admits a coloring $c$ of type 2. We can extend to $G$ the coloring of $G\setminus\{a\}$ induced by $c$ as follows:
If $a_1$, $a_2$, and $a_3$ are all assigned the same color, then we assign to $a$ the other color. Otherwise, we assign to $a$ the color that
appears at least twice among the colors of $b_1$, $b_2$, and $b_3$.
Now, since $G$ does not contain a 2-vertex nor a 3-vertex adjacent to three vertices of degree at most 4,
we have ${\rm mad(G)}\ge\frac{10}3$. This can be seen using the discharging procedure such that the initial charge of each vertex is its degree
and every vertex of degree at least 5 gives $\frac13$ to each adjacent 3-vertex.
The final charge of a 3-vertex is at least $3+\frac13=\frac{10}3$, the final charge of a 4-vertex is $4>\frac{10}3$, and the final charge of a $k$-vertex with $k\ge 5$ is
at least $k-k\times\frac13=\frac{2k}3\ge\frac{10}3$. Now, ${\rm mad(G)}\ge\frac{10}3$ contradicts the fact that $G$ is a planar graph with girth 5,
and this contradiction shows that $G$ contains a 2-vertex.
\end{proof}

We are now ready to prove Theorem~\ref{2c'}. The case of $(1,0)$-coloring is proved in~\cite{EMOP11} in a stronger form which takes
into account restrictions on both the girth and the maximum degree of the input planar graph.\\

Proof of the case $(k,0)$, $k\ge 2$.\\
We consider a graph $G_{k,0}$ as described in Lemma~\ref{2deg}, which contains a path $uxv$ where $x$ is a 2-vertex.
By minimality, any $(k,0)$-coloring of $G_{k,0}\setminus\{x\}$ is such that $u$ and $v$ get distinct saturated colors.
Let $G$ be the graph obtained from $G_{k,0}\setminus\{x\}$ by adding three 2-vertices $x_1$, $x_2$, and $x_3$ to create the path $ux_1x_2x_3v$.
So any $(k,0)$-coloring of $G$ is such that $x_2$ is colored $k^1$.
To prove the NP-completeness, we reduce the $(k,0)$-coloring problem to the $(1,0)$-coloring problem.
Let $I$ be a planar graph with girth $g_{1,0}$. For every vertex $s$ of $I$, add $(k-1)$ copies of $G$ such that the vertex
$x_2$ of each copy is adjacent to $s$, to obtain the graph $I'$.
By construction, $I'$ is $(k,0)$-colorable if and only if $I$ is $(1,0)$-colorable.
Moreover, $I'$ is planar, and since $g_{k,0}\le g_{1,0}$, the girth of $I'$ is $g_{k,0}$.\\

Proof of the case $(1,1)$.\\
There exist two independent proofs~\cite{FJLS03,FH03} that $(1,1)$-coloring is NP-complete for triangle-free planar graphs
with maximum degree 4. We use a reduction from that problem to prove that $(1,1)$-coloring is NP-complete for planar graphs
with girth $g_{1,1}$ (recall that $g_{1,1}$ is either 5 or 6 by the results in Section~ref{} and~\cite{BKY2011}).
We consider a graph $G_{1,1}$ as described in Lemma~\ref{2deg}, which contains a path $uxv$ where $x$ is a 2-vertex.
By minimality, any $(1,1)$-coloring of $G_{1,1}\setminus\{x\}$ is such that $u$ and $v$ get distinct saturated colors.
Let $G$ be the graph obtained from $G_{1,1}\setminus\{x\}$ by adding a vertex $u'$ adjacent to $u$ and a vertex $v'$ adjacent to $v$.
So any $(1,1)$-coloring of $G$ is such that $u'$ and $v'$ get distinct colors and $u'$ (resp. $v'$) has a color distinct from
the color of its (unique) neighbor. We construct the graph $E_{a,b}$ from two vertices $a$ and $b$ and two copies of $G$ such that
$a$ is adjacent to the vertices $u'$ of both copies of $G$ and $b$ is adjacent to the vertices $v'$ of both copies of $G$.
There exists a $(1,1)$-coloring of $E_{a,b}$ such that $a$ and $b$ have distinct colors and neither $a$ nor $b$ is saturated.
There exists a $(1,1)$-coloring of $E_{a,b}$ such that $a$ and $b$ have the same color.
Moreover, in every $(1,1)$-coloring of $E_{a,b}$ such that $a$ and $b$ have the same color, both $a$ and $b$ are saturated.

The reduction is as follows. Let $I$ be a planar graph.
For every edge $(p,q)$ of $I$, replace $(p,q)$ by a copy of $E_{a,b}$ such that $a$ is identified with $p$
and $b$ is identified with $q$, to obtain the graph $I'$.
By the properties of $E_{a,b}$, $I$ is $(1,1)$-colorable if and only if $I'$ is $(1,1)$-colorable.
Moreover, $I'$ is planar with girth $g_{1,1}$.
\\

Proof of the case $(k,j)$.\\
We consider a graph $G_{k,j}$ as described in Lemma~\ref{2deg}, which contains a path $uxv$ where $x$ is a 2-vertex.
By minimality, any $(k,j)$-coloring of $G_{k,j}\setminus\{x\}$ is such that $u$ and $v$ get distinct saturated colors.
Let $G$ be the graph obtained from $G_{k,j}\setminus\{x\}$ by adding a vertex $u'$ adjacent to $u$ and a vertex $v'$ adjacent to $v$.
So any $(k,j)$-coloring of $G$ is such that $u'$ and $v'$ get distinct colors and $u'$ (resp. $v'$) has a color distinct from
the color of its (unique) neighbor.
Let $t=\min(k-1,j)$. To prove the NP-completeness, we reduce the $(k,j)$-coloring to the $(k-t,j-t)$-coloring.
Thus the case $(k,k)$ reduces to the case $(1,1)$ which is NP-complete, and the case $(k,j)$ with $j<k$ reduces to the case $(k-j,0)$
which is NP-complete. The reduction is as follows.
Let $I$ be a planar graph with girth $g_{k-t,j-t}$. For every vertex $s$ of $I$, add $t$ copies of $G$ such that the vertices
$u'$ and $v'$ of each copy is adjacent to $s$, to obtain the graph $I'$.
By construction, $I$ is $(k-t,j-t)$-colorable if and only if $I'$ is $(k,j)$-colorable.
Moreover, $I'$ is planar, and since $ g_{k,j}\le g_{k-t,j-t}$, the girth of $I'$ is $g_{k,j}$.

\section{NP-completeness of $(k,j,i)$-colorings}

In this section, we prove Theorem~\ref{3c} using a reduction from 3-colorability, i.e. $(0,0,0)$-colorability, which is NP-complete
for planar graphs~\cite{GJS76}.

Let $E$ be the graph depicted in Fig~\ref{fig:kk1}.
The graph $E'$ is obtained from $2k-1$ copies of $E$ by identifying the edge $ab$ of all copies.
Take 4 copies $E'_1$, $E'_2$, $E'_3$, and $E'_4$ of $E'$ and consider a triangle $T$ formed by the vertices
$y_0$, $x_0$, $x_1$ in one copy of $E$ in $E'_1$.
The graph $E''$ is obtained by identifying the edge $y_0x_0$ (resp. $y_0x_1$, $x_0x_1$) of $T$ with the edge $ab$ of $E'_2$ (resp. $E'_3$, $E'_4$). 
The edge $ab$ of $E'_1$ is then said to be the edge $ab$ of $E''$.

\begin{figure}[htbp]
\begin{center}
\includegraphics[width=12cm]{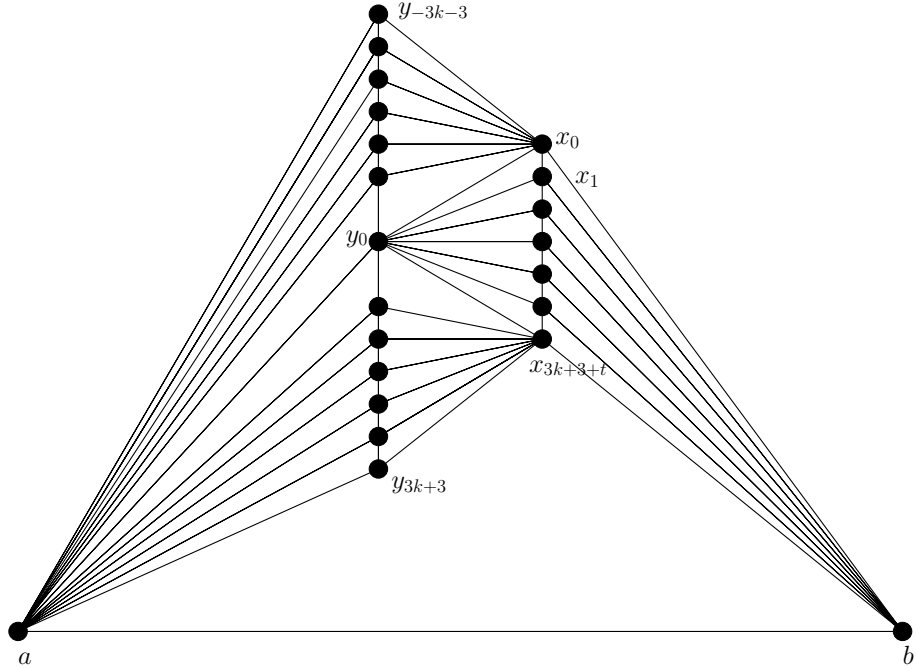}
\caption{The graph $E$. We take $t=0$ if $k$ is odd and $t=1$ if $k$ is even, so that $3k+3+t$ is even.}
\label{fig:kk1}
\end{center}
\end{figure}

\begin{lemma}{\ }
\label{lem:kk1}
\begin{enumerate}
 \item $E''$ admits a $(0,0,0)$-coloring.
 \item $E'$ does not admit a $(k,k,1)$-coloring such that $a$ and $b$ have a same color of improperty $k$.
 \item $E''$ does not admit a $(k,k,1)$-coloring such that $a$ and $b$ have the same color.
\end{enumerate}
\end{lemma}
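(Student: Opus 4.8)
The plan is to establish each of the three claims of Lemma~\ref{lem:kk1} by exploiting the internal structure of the gadget $E$ together with the combinatorial constraints forced by a $(k,k,1)$-coloring. The unifying idea is that the graph $E$ is designed so that, once its endpoints $a$ and $b$ are assigned colors of high improperty, the color classes propagate through the copies in $E'$ and eventually saturate, leaving no admissible color for some internal vertex. For the positive statement (part 1), I would exhibit an explicit $3$-coloring: since $E''$ is built by gluing copies of $E$ along edges and along a triangle $T$, it suffices to $3$-color a single copy of $E$ compatibly with all identifications, and then check that the triangle identifications of $E'_2,E'_3,E'_4$ onto the three edges of $T$ are consistent with a proper $3$-coloring. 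The parity condition in the caption ($3k+3+t$ even) is presumably what makes a suitable proper coloring of the constituent cycles/paths exist, so I would verify that the exhibited coloring respects girth and degree constraints, noting that a $(0,0,0)$-coloring is just a proper $3$-coloring.

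\textbf{For part 2}, I would argue by a counting/saturation argument. Suppose $a$ and $b$ share a color, say color $k$ (one of the two colors of improperty $k$), and assume for contradiction that $E'$ admits a $(k,k,1)$-coloring. Because $E'$ consists of $2k-1$ copies of $E$ glued along the single edge $ab$, the two vertices $a$ and $b$ together have very many neighbors across the copies. The design of $E$ should force that whenever $a$ and $b$ are both colored $k$, each copy of $E$ contributes at least one neighbor of $a$ (or of $b$) that must also take color $k$; since there are $2k-1$ copies and each of $a,b$ can tolerate at most $k$ same-colored neighbors, the pigeonhole principle forces one of $a,b$ to exceed its allowed improperty $k$. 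The key step here is to prove the local propagation claim for a single copy of $E$: that is, to show that inside one $E$, fixing $a$ and $b$ to the same color-$k$ class forces an extra same-color neighbor. This is the heart of the gadget and I expect it to require a short case analysis following the paths and cycles inside $E$.

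\textbf{For part 3}, I would combine part 2 with the triangle construction. Suppose $E''$ has a $(k,k,1)$-coloring in which $a$ and $b$ (the endpoints of $E'_1$) receive the same color. First, that color cannot be the improperty-$1$ color: an easy degree argument should rule out both $a$ and $b$ lying in the sparse class while still propagating through $2k-1$ copies. Hence $a$ and $b$ share a color of improperty $k$, and part 2 applied to $E'_1$ gives a contradiction \emph{unless} $a$ or $b$ escapes saturation by using the triangle $T=y_0x_0x_1$. The role of $E'_2,E'_3,E'_4$, glued onto the three edges of $T$, is exactly to forbid this escape: each edge of $T$ plays the role of an $ab$-edge for another copy of $E'$, so by part 2 no two vertices of $T$ can share a color of improperty $k$, which means $T$ must be properly colored using all three colors — but this over-constrains the colors available to $a$ and $b$ and re-imposes saturation, completing the contradiction.

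\textbf{The main obstacle} will be the local propagation lemma for a single copy of $E$ (used in part 2), since all three parts ultimately rest on understanding precisely how colors are forced inside $E$; the parity bookkeeping governed by $t$ and the quantity $3k+3+t$ will need careful handling so that the forcing works uniformly for both parities of $k$. Once that local claim is secured, parts 1 and 3 follow by assembling copies and invoking the pigeonhole bound against the improperty threshold $k$.
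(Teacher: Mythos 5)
Your overall architecture for parts 1 and 2 matches the paper's: an explicit proper $3$-coloring for part 1, and for part 2 a pigeonhole over the $2k-1$ copies of $E$ against the improperty budget of $a$ and $b$. But you explicitly defer the one step that carries all the content, namely the local claim about a single copy of $E$ in which $a$ and $b$ are the only vertices colored $k_1$. The paper's argument there is quantitative rather than a short case analysis: in such a copy one of $x_0,y_0,x_{3k+3+t}$ must receive the other improperty-$k$ color $k_2$, and then a vertex colored $k_1$ and a vertex colored $k_2$ both dominate a path on at least $3k+3$ vertices; that path avoids color $k_1$, every three consecutive vertices on it must contain a vertex colored $k_2$ (otherwise some vertex colored $1$ would have two neighbors colored $1$), so the path contains at least $k+1$ vertices colored $k_2$, exceeding the improperty bound $k$ of the dominating $k_2$-vertex. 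Without this counting argument the heart of the lemma is missing.

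More seriously, your part 3 has the two cases inverted, and the case you dismiss is exactly the one the triangle construction exists to handle. You claim that $c(a)=c(b)=1$ (the improperty-$1$ color) can be ruled out by ``an easy degree argument''; it cannot. Two adjacent vertices both in the improperty-$1$ class are perfectly legal provided neither has a second neighbor in that class, and $E'$ alone admits such colorings. The correct logic is the opposite: part 2 applied to $E'_1$ immediately excludes the case where $a$ and $b$ share an improperty-$k$ color (there is no ``escape via the triangle'' to worry about), so the only surviving case is $c(a)=c(b)=1$. The triangle $T=y_0x_0x_1$ and the copies $E'_2,E'_3,E'_4$ are there precisely for that case: since $a$ and $b$ are adjacent and both colored $1$, no further neighbor of $a$ or $b$ may be colored $1$, hence $y_0,x_0,x_1$ use only the two improperty-$k$ colors, some edge of $T$ is monochromatic in such a color, and part 2 applied to the copy of $E'$ glued onto that edge yields the contradiction. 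Your version, in which $T$ is forced to be rainbow and this ``over-constrains'' $a$ and $b$, does not produce a contradiction and would not close the proof.
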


\begin{proof}
\begin{enumerate}
 \item The following $(0,0,0)$-coloring $c$ of $E$ is unique up to permutation of colors:
 $c(a)=c(x_i)=1$ for even $i$, $c(b)=c(y_i)=2$ for even $i$, and $c(x_i)=c(y_i)=3$ for odd $i$.
 This coloring can be extended into a $(0,0,0)$-coloring of $E'$ and $E''$.
 \item Let $k_1$, $k_2$, and 1 denote the colors in a potential $(k,k,1)$-coloring $c$ of $E'$ such that $c(a)=c(b)=k_1$.
 By the pigeon-hole principle, one of the $2k-1$ copies of $E$ in $E'$, say $E^*$, is such that $a$ and $b$ are the only vertices with color $k_1$.
 So, one of the vertices $x_0$, $y_0$, and $x_{3k+3+t}$ in $E^*$ must get color $k_2$ since they cannot all get color $1$.
 We thus have a vertex $v_1\in\{a,b\}$ colored $k_1$ and vertex $v_2\in\{x_0, y_0, x_{3k+3+t}\}$ colored $k_2$ in $E^*$ which both dominate a path
 on at least $3k+3$ vertices. This path contains no vertex colored $k_1$ since it is in $E^*$. Moreover, it contains at most $k$ vertices colored $k_2$.
 On the other hand, every set of 3 consecutive vertices in this path contains at least one vertex colored $k_2$, so it contains at least $\frac{3k+3}3=k+1$
 vertices colored $k_2$. This contradiction shows that $E'$ does not admit a $(k,k,1)$-coloring such that $a$ and $b$ have a same color of improperty $k$.
 \item By the previous item and by construction of $E''$, if $E''$ admits a $(k,k,1)$-coloring $c$ such that $c(a)=c(b)$, then $c(a)=c(b)=1$.
 We thus have that $\{c(y_0),c(x_0),c(x_1)\}\subset\{k_1,k_2\}$. This implies that at least one edge of the triangle $T$ is monochromatic
 with a color of improperty $k$. By the previous item, the coloring $c$ cannot be extended to the copy of $E'$ attached to that monochromatic edge.
 This shows that $E''$ does not admit a $(k,k,1)$-coloring such that $a$ and $b$ have the same color.
\end{enumerate}
\end{proof}

We can now give a polynomial construction that transforms every planar graph $G$ into a planar graph $G'$ such that
$G'$ is $(0,0,0)$-colorable if $G$ is $(0,0,0)$-colorable and $G'$ is not $(k,k,1)$-colorable if $G$ is not $(0,0,0)$-colorable,
for every fixed integer $k$. 
The graph $G'$ is obtained from $G$ by identifying every edge of $G$ with the edge $ab$ of a copy of $E''$.
If $G$ is $(0,0,0)$-colorable, then this coloring can be extended into a $(0,0,0)$-coloring of $G'$ by Lemma~\ref{lem:kk1}.1.
On the other hand, if $G$ is not $(0,0,0)$-colorable, then every $(k,k,1)$-coloring $G$ contains a monochromatic edge $uv$, and then
the copy of $E''$ corresponding to $uv$ (and thus $G'$) does not admit a $(k,k,1)$-coloring by Lemma~\ref{lem:kk1}.3.

\end{document}